\newtheorem{theorem}{Theorem}[section]
\theoremstyle{definition}
\newtheorem{remark}[theorem]{Remark}
\numberwithin{equation}{section}
\numberwithin{figure}{section} 
\numberwithin{table}{section}
\DeclareMathOperator{\PD}{{\rm PD}}
\newcommand\st{{\rm st}}
\newcommand \cqfd{\unskip\kern 6pt\penalty 500
\raise -2pt\hbox{\vrule\vbox to10pt{\hrule width 4pt
\vfill\hrule}\vrule}\par}                 
\def\adots{\mathinner{\mkern2mu\raise1pt\hbox{.}
\mkern3mu\raise4pt\hbox{.}\mkern1mu\raise7pt\hbox{.}}}
\def\hfl#1{\frac{\buildrel{#1}}{{\hbox to 12mm{\rightarrowfill}}}}
\def \\R^n \times \R^n
\rightarrow \R{\mathop{\R^n \times \R^n
\rightarrow \R}}
\newcommand\N {{\mathbb N}}
 \newcommand\R {{\mathbb R}}
\newcommand\RR {{\mathbb R}} 
 \newcommand\Z {{\mathbb Z}}
\newcommand{\smat}[4]
{{\(\!\!\begin{array}{cc}{#1}\!&\!{#2}\\begin{equation*}-0.1cm]{#3}\!&\!{#4}\end{array}\!\!\)}}
\long\def\forget#1\forgotten{} %
\long\def\forgett#1\forgottent{} %
\def\circ{\mathchoice%
 {\mathrel{\raise 1pt\hbox{$\scriptstyle\mathchar"020E$}}}
 {\mathrel{\raise 1pt\hbox{$\scriptstyle\mathchar"020E$}}}
 {\mathrel{\raise 1pt\hbox{$\scriptscriptstyle\mathchar"020E$}}}
 {}
}
\newcommand{\nc}{\newcommand} \nc{\on}{\operatorname}
\nc{\df}{\on{\it df}}
\nc{\conf}{\on{conf}}
\nc{\spt}{\on{spt}}
\nc{\norm}[1]{\| #1 \|}
\nc{\parallelleer}{\norm{\ }} 
\nc{\parallelh}{\norm h} 
\nc{\parallelk}{\norm k} 
\nc{\parallelx}{\norm x} 
\nc{\parallelhrr}{\norm {h_\RR}} 
\nc{\parallelom}{\norm \omega} 
\nc{\parallelomij}{\norm {\omega_{i_j}}} 
\nc{\parallelomx}{\norm {\omega_{x}}} 
\nc{\parallelpi}{\norm \pi} 
\nc{\parallelalf}{\norm \alpha} 
\nc{\parallelalfs}{\norm {\alpha_s}} 
\nc{\parallelalfi}{\norm {\alpha_i}} 
\nc{\parallelalfij}{\norm {\alpha_{i_j}}} 
\nc{\parallelbeta}{\norm \beta} 
\nc{\parallelbetat}{\norm {\beta_t}} 
\nc{\parallelhcapalf}{\norm {h \cap \alpha}} 
\nc{\parallelPDralf}{\norm {\PD_\RR(\alpha)}} 
\nc{\strichleer}{| \  |}
\nc{\NN}{\mathbb N}
\nc{\rr}{\mbox{$\scriptstyle\mathbb R$}}
\nc{\dF}{{\it dF}} 
\nc{\DF}{{\it DF}} 
\nc{\ds}{{\it ds}} 
\nc{\dvol}{{\it dvol}}
\nc{\grad}{{\rm grad}} 
\nc{\strichw}{\|\omega\|} 
\nc{\strichwx}{|\omega_x|}
\nc{\Hess}{{\rm Hess}}
\begin{document}

\title{A strict non-standard inequality~$.999\ldots < 1$}

\author{Karin Usadi Katz}

\author[M.~Katz]{Mikhail G. Katz$^{*}$}

\address{Department of Mathematics, Bar Ilan University, Ramat Gan
52900 Israel} \email{\{katzmik\}@macs.biu.ac.il (remove curly braces)}

\thanks{$^{*}$Supported by the Israel Science Foundation (grants
no.~84/03 and 1294/06) and the BSF (grant 2006393)}

\subjclass[2000]{Primary 26E35; 
Secondary 97A20,
97C30
}

\keywords{decimal representation, generic limit, hyperinteger,
infinitesimal, Lightstone's semicolon, non-standard calculus, unital
evaluation}

\date{\today}

\begin{abstract}
Is~$.999\ldots$ equal to~$1$?  A.~Lightstone's decimal expansions
yield an infinity of numbers in~$[0,1]$ whose expansion starts with an
unbounded number of repeated digits ``$9$''.  We present some
non-standard thoughts on the ambiguity of the ellipsis, modeling the
cognitive concept of {\em generic limit\/} of B.~Cornu and D.~Tall.  A
choice of a non-standard hyperinteger $H$ specifies an $H$-infinite
extended decimal string of~$9$s, corresponding to an infinitesimally
diminished hyperreal value~\eqref{105b}.  In our model, the student
resistance to the unital evaluation of $.999\ldots$ is directed
against an unspoken and unacknowledged application of the standard
part function, namely the stripping away of a ghost of an
infinitesimal, to echo George Berkeley.  So long as the number system
has not been specified, the students' hunch that $.999\ldots$ can fall
infinitesimally short of $1$, can be justified in a mathematically
rigorous fashion.
\end{abstract}

\maketitle 

\tableofcontents

\section{The problem of unital evaluation}

Student resistance to the evaluation of~$.999\ldots$ as the real
number~$1$ (henceforth referred to as the {\em unital evaluation}) has
been widely discussed in the mathematics education literature.  It has
been suggested that the source of such resistance lies in a
psychological predisposition in favor of thinking of~$.999\ldots$ as a
process, or iterated procedure, rather than the final outcome, see for
instance D.~Tall's papers \cite[p.~6]{TS}, \cite[p.~221]{Ta00},
\cite{Ta80} (see also \cite{Ta09b} for another approach).

We propose an alternative model to explain such resistance, in the
framework of non-standard analysis.  From this point of view, the
resistance is directed against an unspoken and unacknowledged
application of the standard part function ``st'' (see
Section~\ref{ten}, item \ref{103}), namely the stripping away of a
ghost of an infinitesimal, to echo George Berkeley~\cite{Be}, implicit
in unital evaluation:

\newcommand{\tail}{{\includegraphics[height=.6in]{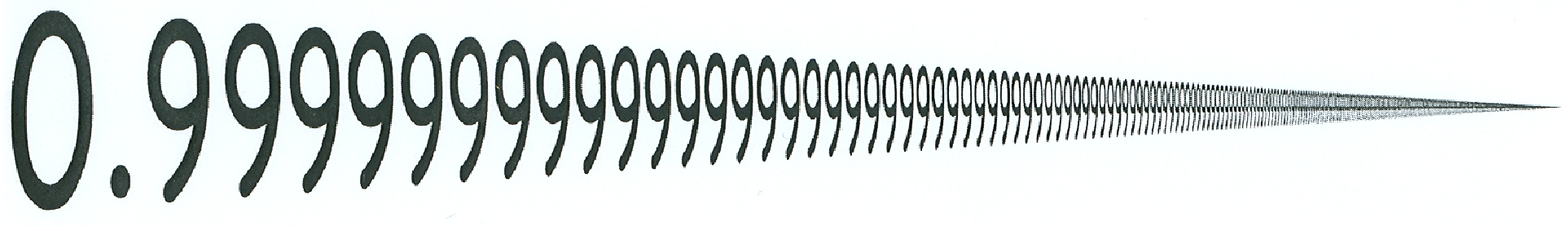}}}

\begin{figure}[ht]
\[
\st \begin{pmatrix}\tail\end{pmatrix} =1.
\]
\caption{Taking standard part of wikiartist's conception of an
infinity of 9s}
\label{9tail}
\end{figure}

What is the significance of such fine (indeed, infinitesimal)
districtions between $.999\ldots$ and $1$?  As this text is addressed
to a wide audience, it may be worth recalling that {\em dividing\/} by
the difference $1-.999\ldots$ will only be possible if the latter is
nonzero, a matter of immediate import for studying rates of change and
calculating derivatives (see Section~\ref{zoom}).

The hyperreal approach to a certain extent vindicates the student
resistance to education professionals' toeing the standard line on
unital evaluation: so long as the number system has not been
specified, the students' hunch that $.999\ldots$ can fall
infinitesimally short of $1$, can be justified in a mathematically
rigorous fashion.  

In Section~\ref{two}, based on a formula for a geometric sum, we
summarize the standard argument in favor of unital evaluation.  In
Section~\ref{three}, we summarize the standard resistance to the
latter.  In Section~\ref{four}, we construct a hyperreal decimal in
$[0,1)$, represented by a string of $H$-infinitely many repeated 9s,
and identify its Lightstone representation.  In Section~\ref{five}, we
square the strict inequality with standard reality, by means of the
standard part function.  In Section~\ref{six}, we represent the
hyperreal graphically by means of an infinite-magnification
microscope, already exploited for pedagogical purposes by Keisler
\cite{Ke} and Tall \cite{Ta80}.  In Section~\ref{zoom}, we exploit the
hyperreal to calculate $f'(1)$.

In Section~\ref{eight}, we develop an applied-mathematical model of a
hypercalculator so as to explain a familiar phenomenon of a calculator
returning a string of 9s in place of an integer.  In
Section~\ref{nine}, we examine the cognitive concept of {\em generic
limit\/} of Cornu and Tall in mathematics education, in relation to a
hyperreal approach to limits.  In Section~\ref{flat}, we set the
situation in perspective, with E. A. Abbott.

Section~\ref{ten} is a technical appendix containing basic material on
non-standard calculus.  The historical Section~\ref{history} contains
an examination of the views of Courant, Lakatos, and E. Bishop.
Section~\ref{proposal} contains a 10-step proposal concerning the
problem of unital evaluation.

\section{A geometric sum}
\label{two}

Evaluating the formula
\begin{equation*}
1+r+r^2+\ldots+r^{n-1}=\frac{1-r^{n}}{1-r}
\end{equation*}
at~$r=\frac{1}{10}$, we obtain
\begin{equation*}
1+\frac{1}{10}+\frac{1}{100}+\ldots + \frac{1}{10^{n-1}}=
\frac{1-\frac{1}{10^{n}}}{1-\frac{1}{10}} ,
\end{equation*}
or alternatively

\begin{equation*}
\underset{n}{\underbrace{1.11\ldots 1}} =
\frac{1-\frac{1}{10^{n}}}{1-\frac{1}{10}} .
\end{equation*}
Multiplying by~$\frac{9}{10}$, we obtain
\begin{equation*}
\begin{aligned}
.\underset{n}{\underbrace{999\ldots 9}}\; & = \frac{9}{10} \left(
\frac{1-\frac{1}{10^{n}}}{1-\frac{1}{10}} \right) \\&=1 -
\frac{1}{10^{n}} 
\end{aligned}
\end{equation*}
for every~$n\in \N$.  As~$n$ increases without bound, the formula
\begin{equation}
\label{21}
.\underset{n}{\underbrace{999\ldots 9}}\; = 1- \frac{1}{10^{n}}
\end{equation}
becomes
\begin{equation*}
.999\ldots = 1.
\end{equation*}
Or does it?

\section{Arguing by ``I told you so''}
\label{three}

When I tried this one on my teenage daughter, she remained
unconvinced.  She felt that the number~$.999\ldots$ is smaller
than~$1$.  After all, just look at it!  There is something missing
before you reach~$1$.  I then proceeded to give a number of arguments.
Apologetic mumbo-jumbo about the alleged ``non-unicity of decimal
representation'' fell on deaf ears.  The one that seemed to work best
was the following variety of the old-fashioned ``because I told you
so'' argument: factor out a~$3$:
\begin{equation*}
3(.333\ldots) = 1
\end{equation*}
to obtain
\begin{equation}
\label{33}
.333\ldots = \frac{1}{3}
\end{equation}
and ``everybody knows'' that the number~$.333\ldots$ is exactly ``a
third'' {\em on the nose}.  Q.E.D.  This worked for a few minutes, but
then the validity of \eqref{33} was called into question, as well.

\section{Coming clean}
\label{four}

Then I finally broke down.  In Abraham Robinson's theory of hyperreal
analysis \cite{Ro}, there is a notion of an infinite hyperinteger (see
Section~\ref{ten}, item~\ref{108}).  H.~Jerome Keisler \cite{Ke} took
to denoting such an entity by the symbol
\begin{equation*}
H,
\end{equation*}
possibly because its inverse is an infinitesimal~$h$ typically
appearing in the denominator of the familiar definition of derivative
(it most decidedly does {\em not\/} stand for ``Howard'').  Taking
infinitely many terms in formula \eqref{21} is interpreted as
replacing~$n\in \N$ by an infinite hyperinteger
\begin{equation*}
H \in \N^* \setminus \N.
\end{equation*}
The transfer principle (see Section~\ref{ten}, item~\ref{101}) applied
to \eqref{21} then yields
\begin{equation*}
.\underset{H}{\underbrace{999\ldots}}\; = 1- \frac{1}{10^{H}}
\end{equation*}
where the infinitesimal quantity~$\frac{1}{10^{H}}$ is nonzero:
\begin{equation*}
\frac{1}{10^{H}} > 0.
\end{equation*}
Therefore we obtain the strict nonstandard inequality
\begin{equation}
\label{41}
.\underset{H}{\underbrace{999\ldots}}\; < 1
\end{equation}
and my teenager was right all along.  Note that hyperreal extended
decimal expansions were treated by A. Lightstone in
\cite[pp.~245--247]{Li}.  In his notation, the hyperreal appearing in
the left-hand-side of~\eqref{41} appears as the extended decimal
\[
.999\ldots;\ldots 99\hat 9
\]
with the hat ``$\hat{\phantom{9}}$'' indicating the~$H$-th decimal
place, where the {\em last\/} repeated digit~$9$ occurs.  We have
employed the underbrace notation as in \eqref{41} rather than
Lightstone's semicolon, as it parallels the finite case more closely,
and seems more intuitive.  An alternative construction of a strict
inequality $.999\ldots<1$ may be found in \cite{Ri}, however in a
number system which is not a field.

\section{Squaring~$.999\ldots <1$ with reality}
\label{five}

To obtain a real number in place of the hyperreal
$.\underset{H}{\underbrace{999\ldots}}$, we apply the standard part
function ``$\st$'' (see Section~\ref{ten}, item~\ref{103}):
\begin{equation*}
\st \left(.\underset{H}{\underbrace{999\ldots}} \right) = \st \left(
1- \frac{1}{10^{H}} \right) = 1 -\st \left( \frac{1}{10^{H}} \right) =
1.
\end{equation*}
To elaborate further, one could make the following remark.  Even in
standard analysis, the expression~$.999\ldots$ is only shorthand for
the {\bf limit} of the finite expression \eqref{21} when the standard
integer~$n$ increases without bound.  From the hyperreal viewpoint,
``taking the limit'' is accomplished in two steps:
\begin{enumerate}
\item
evaluating the expression at an infinite hyperinteger~$H$, and then
\item
taking the standard part.
\end{enumerate}

The two steps form the non-standard definition of limit (see
Section~\ref{ten}, item~\ref{1010}).  Now the first step (evaluating
at~$H$) produces a hyperreal number dependent on~$H$ (in all cases it
will be strictly less than~$1$).  The second step will strip away the
infinitesimal part and produce the standard real number~$1$ in the
{\em cluster\/} (see Section~\ref{ten}, item~\ref{104}) of points
infinitely close to it.

\begin{remark}[Multiple infinities]
\label{51b}
The existence of more than one infinite hyperreal is not only a
requirement to have a field, but is actually extremely useful.  For
example, using the natural hypperreal extension~$f^*$ of~$f$ (see
Section~\ref{ten}, item~\ref{101}), it is possible to write down a
pointwise definition of uniform continuity of a function~$f$ (see
below).  Such a definition considerably reduces the quantifier
complexity of the standard definition.

To elaborate, note that the standard definition of uniform continuity
of a real function~$f$ can be said to be {\em global\/} rather than
{\em local\/} (i.e. pointwise), in the sense that, unlike ordinary
continuity, uniform continuity cannot be defined as a pointwise
property of~$f$.  Meanwhile, in the framework of Robinson's theory, it
is possible to give a definition of uniform continuity of the real
function~$f$ in terms of its natural hyperreal extension, denoted
\begin{equation*}
f^*,
\end{equation*}
in such a way that the definition is local, in the sense of depending
only on each pointwise cluster (see Section~\ref{ten}, item~\ref{104})
in the domain of~$f^*$.  Thus,~$f$ is uniformly continuous on~$\R$ if
the following condition is satisfied:
\begin{equation*}
\forall x\in \R^* \quad \left( y\approx x \implies
f^{*\phantom{i\!\!}}_{\phantom{I}}(y)\approx f^*(x)\right) .
\end{equation*}
Here~$\approx$ stands for the relation of being infinitely close.  The
condition must be satisfied at the infinite hyperreals~$x$, in
addition to the finite ones.  This addition is what distinguishes
uniform continuity from ordinary continuity.
\end{remark}

\section{Hyperreals under magnifying glass}
\label{six}

The symbol ``$\infty$'' is employed in standard real analysis to
define a formal completion of the real line~$\R$, namely
\begin{equation}
\label{51}
\R\cup \{\infty\}
\end{equation}
(sometimes a formal point~``$-\infty$'' is added, as well).  

\begin{figure}
\includegraphics[height=4in]{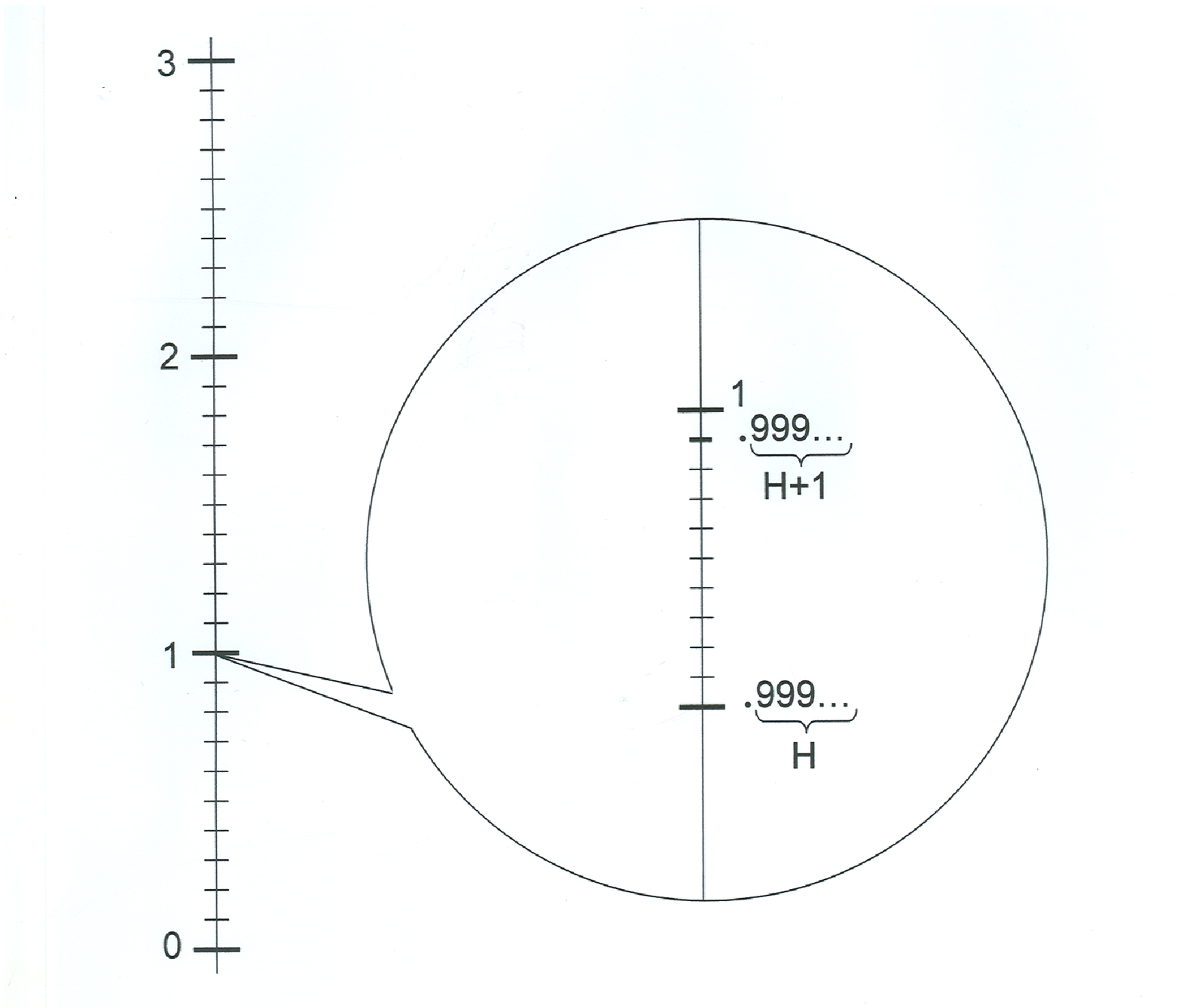}
\caption{Three infinitely close hyperreals under a microscope}
\label{micro}
\end{figure}

Such a formal device is helpful in simplifying the statements of
certain theorems (which would otherwise have a number of subcases).
The symbol is used in a {\em different\/} sense in topology and
projective geometry, where the Thom compactification~$\R\cup
\{\infty\}$ of~$\R$ is a circle:
\begin{equation}
\label{52b}
\R\cup \{\infty\} \approx S^1.
\end{equation}
We have refrained from using the symbol ``$\infty$'' to denote an
infinite hyperreal, as in
\[
.\underset{\infty}{\underbrace{999\ldots}},
\]
even though the symbol ``$\infty$'' does convey the idea of the
infinite more effectively than the symbol ``$H$'' that we have used.
The reason is so as to avoid the risk of creating a false impression
of the uniqueness of an infinite point (as in \eqref{51} or
\eqref{52b} above), in the field~$\R^*$ (see Section~\ref{ten},
item~\ref{102}).

We represent the hyperreal
\begin{equation}
\label{52}
.\underset{H}{\underbrace{999\ldots}}
\end{equation}
visually by means of an infinite-resolution microscope already
exploited for pedagogical purposes by Keisler \cite{Ke}.  The
hyperreal~$.\underset{H}{\underbrace{999\ldots}}$ appearing in the
diagram of Figure~\ref{micro} illustrates graphically the strict
hyperreal inequality
\begin{equation*}
.\underset{H}{\underbrace{999\ldots}} < 1,
\end{equation*}
where, as we already mentioned, the symbol~$H$ is exploited to denote
a fixed infinite Robinson hyperinteger (see Section~\ref{ten},
item~\ref{108}).

\section{Zooming in on slope of tangent line}
\label{zoom}

To calculate the slope of the tangent line to the curve $y=x^2$ at
$x=1$, we first compute the ratio
\[
\frac{\Delta y}{\Delta x} = \frac{(.9..)^2 -1^2}{.9.. - 1} =
\frac{(.9.. -1)(.9.. + 1)}{.9.. -1} = .9.. + 1,
\]
where we have deleted the underbrace
$\underset{H}{\underbrace{\phantom{.999\ldots}}}$ and also shortened
the symbol~$.999\ldots$ to $.9..$, so as to lighten the notation.

\begin{figure}
\includegraphics[height=3in]{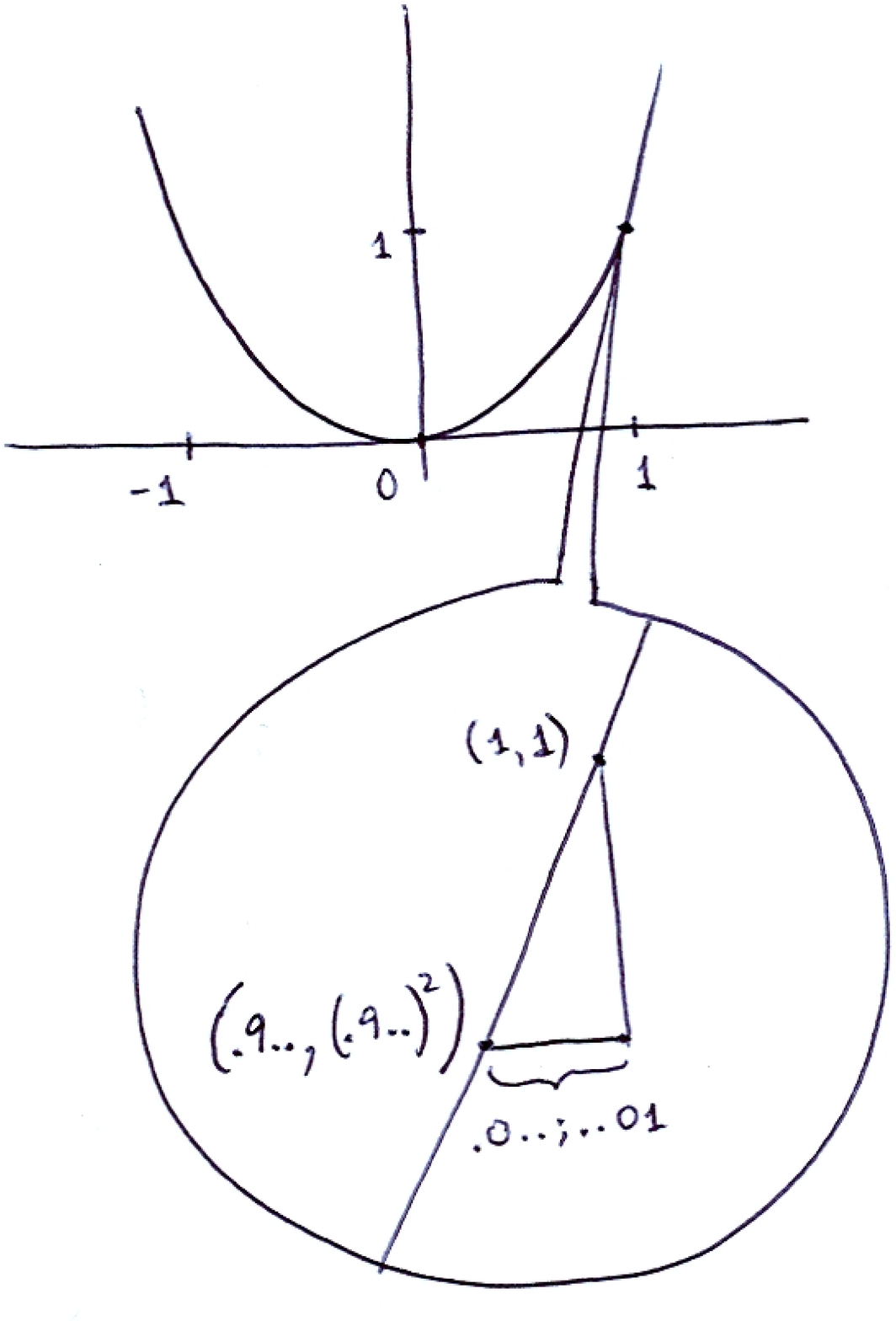}
\caption{Zooming in on the slope of tangent line to the curve $y=x^2$
at $x=1$}
\label{slope}
\end{figure}

Therefore the slope is
\[
\frac{dy}{dx} = \st\left( \frac{\Delta y}{\Delta x} \right) =
\st(.9.. + 1) = \st(.9..) + 1 = 1+1=2.
\]
Note that 
\[
\Delta x = .9.. - 1 = -.0..;..01
\]
in Lightstone's notation, where the digit ``1'' appears in the $H^{\rm
th}$-decimal place, as illustrated in Figure~\ref{slope}.

\section{Hypercalculator returns~$.999\ldots$}
\label{eight}

Everyone who has ever held an electronic calculator is familiar with
the curious phenomenon of it sometimes returning the value
\begin{equation*}
.999999
\end{equation*}
in place of the expected~$1.000000$.  For instance, a calculator
programmed to apply Newton's method to find the zero of a function,
may return the~$.999999$ value as the unique zero of the function
\[
\log x.
\]
Developing a model to account for such a phenomenon is complicated by
the variety of the degree of precision displayed, as well as the
greater precision typically available internally than that displayed
on the LCD.  To simplify matters, we will consider an idealized model,
called a {\em hypercalculator}, of a theoretical calculator that
applies Newton's method precisely~$H$ times, where~$H$ is a fixed
infinite hyperinteger, as discussed in the previous section.

\begin{theorem}
Let~$f$ be a concave increasing differentiable function with domain an
open interval~$(1-\epsilon, 1+\epsilon)$ and vanishing at its
midpoint.  Then the hypercalculator applied to~$f$ will return a
hyperreal decimal~$.999\ldots$ with an initial segment consisting of
an unbounded number of repeated~$9$'s.
\end{theorem}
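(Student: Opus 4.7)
The plan is to combine a classical fact about Newton's method on a concave increasing function with the transfer principle and the characterization of hyperreals infinitely close to $1$ from below in terms of Lightstone's extended decimal.

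First I would analyze the standard Newton iteration $N(x) = x - f(x)/f'(x)$ on the interval $(1-\epsilon,1)$. Concavity of $f$ (so that the tangent at any point lies above the graph), together with $f(1)=0$ and $f$ increasing, yields the two classical facts that make Newton's method well-behaved from below: (i)~if $x_0 \in (1-\epsilon,1)$, then $f(x_0)<0$ and $f'(x_0)>0$, so $x_1 = N(x_0) > x_0$; and (ii)~the tangent-above-graph inequality $0 = f(1) \le f(x_0)+f'(x_0)(1-x_0)$ rearranges to $N(x_0) \le 1$. Thus the iterates $x_n = N^n(x_0)$ form a monotone increasing sequence bounded above by $1$, and standard arguments (a fixed point of $N$ in $(1-\epsilon,1]$ must be a zero of $f$) give $x_n \to 1$ in the usual sense.

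Next I would invoke the transfer principle to extend the iteration to hyperreal indices. The recursively defined real sequence $\{x_n\}_{n\in\N}$ admits a natural extension $\{x_n\}_{n\in\N^*}$, and the first-order statements ``$x_n < 1$ for all $n\in\N$'' and ``for every standard $\eps>0$ there exists $N\in\N$ with $|x_n-1|<\eps$ for all $n\ge N$'' transfer to $\N^*$. Applying these to the fixed infinite hyperinteger $H$, we get $x_H < 1$ together with $|x_H - 1| < \eps$ for every standard $\eps > 0$. Hence $x_H = 1 - \eta$ with $\eta > 0$ infinitesimal, so $x_H \approx 1$ from below, exactly as in Section~\ref{five}.

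The final step is to translate ``$x_H$ infinitely close to $1$ from below'' into a statement about Lightstone's extended decimal. For any standard $k\in\N$, $\eta < 10^{-k}$ implies $x_H > 1 - 10^{-k}$, which by the computation of Section~\ref{two} is exactly $.\underbrace{999\ldots9}_{k}$. Comparing decimal expansions on the first $k$ places, this forces the first $k$ digits of $x_H$ to equal $9$. Since $k$ is arbitrary in $\N$, the initial segment of the Lightstone expansion of $x_H$ consists of an unbounded (and in fact $H'$-infinite for any $H'\ll H$) string of $9$'s, which is the desired conclusion.

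The main obstacle I anticipate is the careful handling of Newton's method as a transferable iterative procedure: one must verify that the starting point $x_0$ can be chosen in a transferable way (e.g.\ as a definable function of $\epsilon$) so that the hyperreal iterate $x_H$ is genuinely well-defined, and that the monotone convergence argument used in the standard setting survives transfer intact. The rest is a routine combination of quadratic convergence estimates, the definition of $\approx$, and the geometric-sum identity~\eqref{21}.
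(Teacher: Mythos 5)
Your proposal is correct and follows essentially the same route as the paper: an inductive bound $x_n<1$ for all standard $n$ obtained from concavity (you use the tangent-above-graph inequality where the paper uses the mean value theorem together with the monotonicity of $f'$), followed by transfer to the infinite index $H$. You are in fact somewhat more complete than the paper, since you also justify $x_H\approx 1$ via the non-standard characterization of the limit and spell out why $1-10^{-k}<x_H<1$ for every standard $k$ forces an unbounded initial string of repeated $9$'s, steps the paper's proof leaves implicit.
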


\begin{proof}
Assume for simplicity that~$f(x_0)<0$.  We have 
\begin{equation*}
x_1= x_0 + \frac{|f(x_0)|}{f'(x_0)}.
\end{equation*}
By the mean value theorem, there is a point~$c$ such that~$ x_0 < c <
1$ where~$f'(c)= \frac{|f(x_0)|}{1-x_0}$, or
\begin{equation*}
\frac{|f(x_0)|}{f'(c)}= 1-x_0 .
\end{equation*}
Since~$f$ is concave, its derivative~$f'$ is decreasing, hence
\begin{equation*}
x_1 = x_0 + \frac{|f(x_0)|}{f'(x_0)} < x_0 +1 -x_0 = 1 .
\end{equation*}
Thus~$x_1 < 1$.  Inductively, the point~$x_{n+1}= x_n +
\frac{|f(x_n)|} {f'(x_n)}$ satisfies~$x_{n}<1$ for all~$n$.  By the
transfer principle (see Section~\ref{ten}, item~\ref{101}), the
hyperreal~$x_{H}$ satisfies a strict inequality
\begin{equation*}
x_{H}<1,
\end{equation*}
as well (cf.~\eqref{41}).  Hence the hypercalculator returns a value
strictly smaller than~$1$ yet infinitely close to~$1$, proving the
theorem.
\end{proof}

\section{{\em Generic limit\/} and precise meaning of infinity}
\label{nine}

The precise meaning of the finite expression
\begin{equation*} 
.999...9, \quad n \; {\rm times}
\end{equation*}
is that the repeated digit 9 occurs precisely~$n$ times.  The standard
non-terminating decimal
\begin{equation*}
.999\ldots,
\end{equation*}
as it is traditionally written, is said to have an unbounded number of
repeated digits 9, but the expression ``infinitely many~9's" is only a
figure of speech, as ``infinity" is not a number in standard analysis,
in the sense that, whenever a precise meaning is attributed to the
phrase ``infinitely many~9's", it is almost invariably in terms of
{\em limits}.  D. Tall writes in \cite{Ta09a} as follows:
\begin{quote}
[...] the infinite decimal~$0.999\ldots$ is intended to signal the
{\em limit\/} of the sequence~$0.9,\;$~$0.99,\;$~$0.999$, ... which is
$1$, but in practice it is ofter imagined as a limiting process which
never quite reaches~$1$.
\end{quote}
Tall \cite{Ta91, Ta09a} describes a concept in cognitive theory he
calls a {\em generic limit concept\/} in the following terms:
\begin{quote}
[...] if a quantity repeatedly gets smaller and smaller and smaller
without ever being zero, then the limiting object is naturally
conceptualised as an extremely small quantity that is not zero (Cornu
\cite{Co91}).  Infinitesimal quantities are natural products of the
human imagination derived through combining potentially infinite
repetition and the recognition of its repeating properties.
\end{quote}
(see \cite{GT} for the related notion of {\em procept}).  A technical
realisation of the cognitive concept of {\em generic limit\/} is
achieved in the hyperreal line (see Section~\ref{ten}, item~\ref{102})
as follows.  Given an infinite hyperinteger~$H$, consider the
hyperreal repeated decimal where the repeated digit~$9$ occurs
precisely~$H$ times, in the sense routinely used in non-standard
calculus, for example, when one partitions a compact interval into~$H$
parts in the proof of the extreme value theorem (see
Section~\ref{ten}, item~\ref{109}).  Such a number can be denoted
suggestively by~$.\underset{H}{\underbrace{999\ldots}}$ with an
underbrace indicating that the digit~$9$ occurs~$H$ times, resulting
in a strict inequality~$.\underset{H}{\underbrace{999\ldots}} < 1$
(with the underbrace indicating that we are not referring to the
standard real).  In Lightstone's notation, this hyperreal would be
expressed by the hyperdecimal
\begin{equation*}
.999...;...9, 
\end{equation*}
the last digit 9 occurring in the~$H$-th position.  Such a hyperreal
appears to be the mathematical counterpart of the cognitive concept of
{\em generic limit}.

\section{Limits, generic limits, and Flatland}
\label{flat}

As far as standard limits are concerned, given the sequence~$u_1=.9$,
$u_2=.99$,~$u_3=.999$, etc., from the hyperreal viewpoint we have
\begin{equation*}
\lim_{n\to \infty} u_n = {\rm st}(u_H),
\end{equation*}
where ``st" is the standard part function which ``strips away" the
infinitesimal part, and outputs the standard real in the cluster of
the hyperreal~$u_H$ (see Section~\ref{ten}, item~\ref{104}).  What may
be bothering the students is this unacknowledged application of the
standard part function, resulting in a loss of an infinitesimal.

We have, in fact, been looking at the problem ``from above'', in the
context of non-standard analysis.  Perhaps a helpful parallel is
provided by the famous animated film {\em Flatland} (cf.~\cite{Ab}),
where the two--dimensional creatures are unable to conceive of what we
think of as the sphere in 3-space, due to their dimension limitation.
Similarly, one can conceive of the difficulty in the understanding of
the unital evaluation of~$.999\ldots$, as due to the limitation of the
standard real vision.

The notion of an infinitesimal appeals to intuition (see R.~Courant's
comment in Section~\ref{history}) and would not go away inspite of
what is, by now, over a century of~$(\epsilon, \delta)$-ideology (see
E.~Bishop's comment in Section~\ref{history}).  Highschool students
are exposed to the thorny~$.999\ldots$ issue before they are exposed
to any rigorous notion of a {\em real number}.  They are not aware of
fine differences between rational numbers, algebraic numbers, real
numbers, hyperreal numbers.  A related point is make by Keisler in his
textbook \cite{Ke}, when he points out that ``we have no way of
knowing what the line in physical space looks like''.  Most students
(perhaps all) initially believe that the mysterious number with
``infinitely many'' repeated digits 9 falls short of the value~$1$.
If an education professional claims that the students are making a
mistake, might he in fact be making a pedagogical error?

\section{A non-standard glossary}
\label{ten}

In this section we present some illustrative terms and facts from
non-standard calculus \cite{Ke}.  The relation of being infinitely
close is denoted by the symbol~$\approx$.  Thus,~$x\approx y$ if and
only if~$x-y$ is infinitesimal.

\subsection{Natural hyperreal extension~$f^*$}
\label{101}
The {\em extension principle\/} of non-standard calculus states that
every real function~$f$ has a hyperreal extension, denoted~$f^*$ and
called the natural extension of~$f$.  The {\em transfer principle\/}
of non-standard calculus asserts that every real statement true
for~$f$, is true also for~$f^*$.  For example, if~$f(x)>0$ for every
real~$x$ in its domain~$I$, then~$f^*(x)>0$ for every hyperreal~$x$ in
its domain~$I^*$.  Note that if the interval~$I$ is unbounded,
then~$I^*$ necessarily contains infinite hyperreals.  We will
typically drop the star~$^*$ so as not to overburden the notation.

\subsection{Internal set}
\label{102}
Internal set is the key tool in formulating the transfer principle,
which concerns the logical relation between the properties of the real
numbers~$\R$, and the properties of a larger field denoted
\[
\R^*
\]
called the {\em hyperreal line}.  The field~$\R^*$ includes, in
particular, infinitesimal (``infinitely small") numbers, providing a
rigorous mathematical realisation of a project initiated by Leibniz.
Roughly speaking, the idea is to express analysis over~$\R$ in a
suitable language of mathematical logic, and then point out that this
language applies equally well to~$\R^*$.  This turns out to be
possible because at the set-theoretic level, the propositions in such
a language are interpreted to apply only to internal sets rather than
to all sets.  Note that the term ``language" is used in a loose sense
in the above.  A more precise term is {\em theory in first-order
logic}.  Internal sets include natural extension of standard sets.

\subsection{Standard part function}
\label{103}
The standard part function ``st" is the key ingredient in Abraham
Robinson's resolution of the paradox of Leibniz's definition of the
derivative as the ratio of two infinitesimals
\[
\frac{dy}{dx}.
\]
The standard part function associates to a finite hyperreal
number~$x$, the standard real~$x_0$ infinitely close to it, so that we
can write
\begin{equation*}
\mathrm{st}(x)=x_0.
\end{equation*}
In other words, ``st'' strips away the infinitesimal part to produce
the standard real in the cluster.  The standard part function ``st" is
not defined by an internal set (see item~\ref{102} above) in
Robinson's theory.

\subsection{Cluster}
\label{104}
Each standard real is accompanied by a cluster of hyperreals
infinitely close to it.  The standard part function collapses the
entire cluster back to the standard real contained in it.  The cluster
of the real number~$0$ consists precisely of all the infinitesimals.
Every infinite hyperreal decomposes as a triple sum
\[
H+r+\epsilon,
\]
where~$H$ is a hyperinteger,~$r$ is a real number in~$[0,1)$, and
$\epsilon$ is infinitesimal.  Varying~$\epsilon$ over all
infinitesimals, one obtains the cluster of~$H+r$.

\subsection{Derivative}
\label{105}
To define the real derivative of a real function~$f$ in this approach,
one no longer needs an infinite limiting process as in standard
calculus.  Instead, one sets
\begin{equation}
\label{deri}
f'(x) = \mathrm{st} \left( \frac{f(x+\epsilon)-f(x)}{\epsilon}
\right),
\end{equation}
where~$\epsilon$ is infinitesimal, yielding the standard real number
in the cluster of the hyperreal argument of ``st'' (the derivative
exists if and only if the value~\eqref{deri} is independent of the
choice of the infinitesimal).  The addition of ``st'' to the formula
resolves the centuries-old paradox famously criticized by George
Berkeley \cite{Be} (in terms of the {\em Ghosts of departed
quantities}, cf.~\cite[Chapter~6]{St}), and provides a rigorous basis
for the calculus.

\subsection{Continuity}
\label{106}
A function~$f$ is continuous at~$x$ if the following condition is
satisfied:~$y\approx x$ implies~$f(y)\approx f(x)$.

\subsection{Uniform continuity} 
\label{107}
A function~$f$ is uniformly continuous on~$I$ if the following
condition is satisfied:

\begin{itemize}
\item
standard: for every~$\epsilon>0$ there exists a~$\delta>0$ such that
for all~$x\in I$ and for all~$y\in I$, if~$|x-y|<\delta$ then
$|f(x)-f(y)| < \epsilon$.
\item
non-standard: for all~$x\in I^*$, if~$x\approx y$ then~$f(x) \approx
f(y)$.
\end{itemize}
See Remark~\ref{51b} for a more detailed discussion.

\subsection{Hyperinteger}
\label{108}
A hyperreal number~$H$ equal to its own integer part 
\[
H = [H]
\]
is called a hyperinteger (here the integer part function is the
natural extension of the real one).  The elements of the complement
$\Z^* \setminus \Z$ are called infinite hyperintegers.

\subsection{Proof of extreme value theorem}
\label{109}
Let~$H$ be an infinite hyperinteger.  The interval~$[0,1]$ has a
natural hyperreal extension.  Consider its partition into~$H$
subintervals of equal length~$\frac{1}{H}$, with partition points~$x_i
= i/H$ as~$i$ runs from~$0$ to~$H$.  Note that in the standard
setting, with~$n$ in place of~$H$, a point with the maximal value
of~$f$ can always be chosen among the~$n+1$ partition points~$x_i$, by
induction.  Hence, by the transfer principle, there is a
hyperinteger~$i_0$ such that~$0\leq i_0 \leq H$ and
\begin{equation}
\label{101b}
f(x_{i_0})\geq f(x_i) \quad \forall i= 0,...,H.
\end{equation}
Consider the real point
\begin{equation*}
c= {\rm st}(x_{i_0}).
\end{equation*}
An arbitrary real point~$x$ lies in a suitable sub-interval of the
partition, namely~$x\in [x_{i-1},x_i]$, so that~${\rm st}(x_i) = x$.
Applying ``st'' to the inequality \eqref{101b}, we obtain by continuity
of~$f$ that~$f(c)\geq f(x)$, for all real~$x$, proving~$c$ to be a
maximum of~$f$ (see \cite[p.~164]{Ke}).

\subsection{Limit}
\label{1010}
We have~$\lim_{x\to a} f(x) = L$ if and only if whenever the
difference~$x-a$ is infinitesimal, the difference~$f(x)-L$ is
infinitesimal, as well, or in formulas: if~${\rm st}(x)=a$ then~${\rm
st}(f(x)) = L$.

Given a sequence of real numbers~$\{x_n|n\in \mathbb{N}\}$, if~$L\in
\mathbb{R}\;$ we say~$L$ is the limit of the sequence and write~$L =
\lim_{n \to \infty} x_n$ if the following condition is satisfied:
\begin{equation}
\label{102b}
{\rm st} (x_H)=L \quad \mbox{\rm for all infinite } H
\end{equation}
(here the extension principle is used to define~$x_n$ for every
infinite value of the index).  This definition has no quantifier
alternations.  The standard~$(\epsilon, \delta)$-definition of limit,
on the other hand, does have quantifier alternations:
\begin{equation}
\label{disaster}
L = \lim_{n \to \infty} x_n\Longleftrightarrow \forall \epsilon>0\;,
\exists N \in \mathbb{N}\;, \forall n \in \mathbb{N} : n >N \implies
d(x_n,L)<\epsilon.
\end{equation}          

\subsection{Non-terminating decimals}
\label{1011}
Given a real decimal 
\[
u=.d_1d_2d_3\ldots,
\]
consider the sequence~$u_1=.d_1$,~$\;u_2=.d_1 d_2$,~$\;u_3=.d_1 d_2
d_3$, etc.  Then by definition,
\[
u=\lim_{n\to \infty} u_n= \st(u_H^{\phantom{I}})
\]
for every infinite~$H$.  Now if~$u$ is a non-terminating decimal, then
one obtains a strict inequality~$u_H^{\phantom{I}}<u$ by transfer from
$u_n<u$.  In particular,
\begin{equation}
\label{105b}
.999\ldots;\ldots \hat 9 = .\underset{H}{\underbrace{999\ldots}} = 1 -
\tfrac{1}{10^H} < 1,
\end{equation}
where the hat~$\hat {\phantom{9}}$ indicates the~$H$-th Lightstone
decimal place.  The standard interpretation of the symbol~$.999\ldots$
as~$1$ is necessitated by notational uniformity: the symbol~$.a_1 a_2
a_3 \ldots$ in every case corresponds to the limit of the sequence of
terminating decimals~$.a_1\ldots a_n$.  Alternatively, the ellipsis
in~$.999\ldots$ could be interpreted as alluding to an infinity of
nonzero digits specified by a choice of an infinite hyperinteger~$H\in
\N^*\setminus \N$.  The resulting~$H$-infinite extended decimal string
of~$9$s corresponds to an infinitesimally diminished hyperreal
value~\eqref{105b}.  Such an interpretation is perhaps more in line
with the naive initial intuition persistently reported by teachers.

\section{Courant, Lakatos, and Bishop}
\label{history}

Prior to Robinson, mathematicians thought of infinitesimals in terms
of ``naive befogging" and ``vague mystical ideas". Thus, Richard
Courant \cite[p.~81]{Cou37} wrote as follows:
\begin{quote}
We must, however, guard ourselves against thinking of~$dx$ as an
``infinitely small quantity" or ``infinitesimal", or of the integral
as the ``sum of an infinite number of infinitesimally small
quantities". Such a conception would be devoid of any clear meaning;
it is only a naive befogging of what we have previously carried out
with precision.
\end{quote}
and again on page 101:
\begin{quote}
We have no right to suppose that first~$\Delta x$ goes through
something like a limiting process and reaches a value which is
infinitesimally small but still not 0, so that~$\Delta x$ and~$\Delta
y$ are replaced by ``infinitely small quantities" or ``infinitesimals"
$dx$ and~$dy$, and that the quotient of these quantities is then
formed. Such a conception of the derivative is incompatible with the
clarity of ideas demanded in mathematics; in fact, it is entirely
meaningless. For a great many simple-minded people it undoubtedly has
a certain charm, the charm of mystery which is always associated with
the word ``infinite"; and in the early days of the differential
calculus even Leibnitz [sic] himself was capable of combining these
vague mystical ideas with a thoroughly clear understanding of the
limiting process. It is true that this fog which hung round the
foundations of the new science did not prevent Leibnitz [sic] or his
great successors from finding the right path. But this does not
release us from the {\bf duty of avoiding every such hazy idea}
[emphasis added--MK] in our building-up of the differential and
integral calculus.
\end{quote}
Needless to say, Courant's criticism was not without merit {\em at the
time of its writing\/} (a quarter century prior to Robinson's
discovery).  I.~Lakatos \cite[p.~44]{La} wrote in '66 as follows:
\begin{quote}
Robinson's work... offers a rational reconstruction of the discredited
infinitesimal theory which satisfies modern requirements of rigour and
which is no weaker than Weierstrass's theory.  This reconstruction
makes infinitesimal theory an almost respectable ancestor of a fully
fledged, powerful modern theory, lifts it from the status of
pre-scientific gibberish, and renews interest in its partly forgotten,
partly falsified history.
\end{quote}
Not everyone was persuaded.  A decade later, Courant's {\em duty of
avoiding every such hazy idea\/} was taken up under a constructivist
banner by E.~Bishop.  In his essay \cite{Bi75} cast in the form of an
imaginary dialog between Brouwer and Hilbert, E. Bishop anchors his
foundational stance in a species of mathematical constructivism.
Thus, Bishop's opposition to Robinson's infinitesimals, expressed in a
bristling review~\cite{Bi77} of Keisler's textbook~\cite{Ke}, was to
be expected (and in fact was anticipated by editor Halmos).  In a
memorable parenthetical remark, Bishop writes \cite{Bi77}:
\begin{quote}
Although it seems to be futile, I always tell my calculus students
that mathematics is not esoteric: It is common sense. (Even the
notorious~$(\epsilon,\delta)$-definition of limit is {\bf common
sense} [emphasis added--MK], and moreover it is central to the
important practical problems of approximation and estimation.)
\end{quote}
Bishop is referring, of course, to the type of {\em common-sense\/}
definition reproduced in~\eqref{disaster}, which he favors over
Keisler's definition \eqref{102b} in terms of Robinson's hyperreals.

Bishop expressed his views about non-standard analysis and its use in
teaching in a brief paragraph toward the end of his essay ``Crisis in
contemporary mathematics" \cite[p.~513-514]{Bi75}.  After discussing
Hilbert's formalist program he writes:

\begin{quote}
A more recent attempt at mathematics by formal finesse is non-standard
analysis. I gather that it has met with some degree of success,
whether at the expense of giving significantly less meaningful proofs
I do not know. My interest in non-standard analysis is that attempts
are being made to introduce it into calculus courses. It is difficult
to believe that {\bf debasement of meaning\/} [emphasis added--MK]
could be carried so far.
\end{quote}
Bishop's view of the introduction of non-standard analysis in the
classroom as no less than a ``debasement of meaning", was duly noted
by J.~Dauben~\cite{Gi92}.

To illlustrate how Bishop anchors his foundational stance in a species
of mathematical constructivism, note that in \cite[pp.~507-508]{Bi75},
he writes:
\begin{quote}
To my mind, it is a major defect of our profession that we refuse to
distinguish [...] between integers that are computable and those that
are not [...] the distinction between computable and non-computable,
or constructive and non-constructive is the source of the most famous
disputes in the philosophy of mathematics...
\end{quote}
On page 511, Bishop defines a principle (LPO) as the statement that 

\begin{quote}
it is possible to search ``a sequence of integers to see whether they
all vanish'',
\end{quote}
and goes on to characterize the dependence on the LPO as a procedure
both Brouwer and Bishop himself reject. S.~Feferman \cite{Fe} explains
the principle as follows:
\begin{quote}
Bishop criticized both non-constructive classical mathematics and
intuitionism.  He called non-constructive mathematics ``a scandal",
particularly because of its ``deficiency in numerical meaning".  What
he simply meant was that if you say something exists you ought to be
able to produce it, and if you say there is a function which does
something on the natural numbers then you ought to be able to produce
a machine which calculates it out at each number.
\end{quote}
The constructivist objections to LPO are similar to objections to the
law of excluded middle, and to proofs by contradiction (for example,
the traditional argument for the irrationality of $\sqrt{2}$ is a
proof by contradiction, but using classical continued fraction
estimates, it is easy to rewrite it in a numerically meaningful
fashion, acceptable from the constructivist viewpoint).

Given that a typical construction of Robinson's infinitesimals (see
Keisler \cite[p. 911]{Ke}) certainly does rely on LPO, Bishop's
opposition to such infinitesimals, expressed in a bristling review
\cite{Bi77} of Keisler's textbook, was to be expected.


Non-standard calculus in the classroom was analyzed in the Chicago
study by K.~Sullivan \cite{Su}.  Sullivan showed that students
following the non-standard calculus course were better able to
interpret the sense of the mathematical formalism of calculus than a
control group following a standard syllabus.  Such a conclusion was
also noted by M.~Artigue~\cite[p.~172]{Ar}.

\section{A 10-step proposal}
\label{proposal}

In the matter of teaching decimal notation, we would like to obtain
some reaction from educators to the following proposal concerning the
problem of the unital evaluation of~$.999\ldots$.  A student may ask:
\begin{quote}
What does the teacher mean to happen exactly after {\em nine, nine,
nine\/} when he writes {\em dot, dot, dot}?
\end{quote}

How is a teacher to handle such a question?  Experience shows that
toeing the standard line on the unital evaluation of~$.999\dots$
possesses a high-frustration factor in the classroom.  Rather than
baffling the student with such a categorical claim, a teacher could
proceed by presenting the following ten points, based on the material
outlined in Section~\ref{ten}:

\begin{enumerate}
\item
the reals are not, as the rationals are not, the maximal number
system;
\item
there exist larger number systems, containing infinitesimals;
\item
in such larger systems, the interval~$[0,1]$ contains many numbers
infinitely close to~$1$;
\item
in a particular larger system called the hyperreal numbers, there is a
generalized notion of decimal expansion for such numbers, starting in
each case with an unbounded number of digits~``$9$'';
\item
all such numbers therefore have an arguable claim to the
notation~``$.999\ldots$'' which is patently ambiguous (the meaning of
the ellipsis~``$\ldots$'' requires disambiguation);
\item
all but {\em one\/} of them are strictly smaller than~$1$;
\item
the {\em convention\/} adopted by most professional mathematicians is
to interpret the symbol ``$.999\ldots$'' as referring to the {\em
largest\/} such number, namely~$1$ itself;
\item
thus, the students' intuition that~$.999\ldots$ falls just short
of~$1$ can be justified in a mathematically rigorous fashion;
\item
the said extended number system is mostly relevant in infinitesimal
calculus (also known as differential and integral calculus);
\item
if you would like to learn more about the hyperreals, go to your
teacher so he can give you further references.
\end{enumerate}

\section{Epilogue}

A goal of our, admittedly non-standard, analysis is both to educate
and to heal.  The latter part involves placing balm upon the
bewilderment of myriad students of decimal notation, frustrated by the
reluctance of their education professionals to yield as much as an
infinitesimal iota in their evaluation of the symbol~$.999...$, or to
acknowledge the ambiguity of an ellipsis.

\section*{Acknowledgments}

We are grateful to David Ebin and David Tall for a careful reading of
an earlier version of the manuscript, and for making numerous helpful
suggestions.

\end{document}